\documentclass[letterpaper]{amsart}

\usepackage[letterpaper,left=1.5in,right=1.5in,top=1in,bottom=1in]{geometry}

\usepackage{ytableau}

\newcommand{\myauthor}{Benjamin Antieau and Ben Williams}
\newcommand{\mytitle}{Prime decomposition for the index of a Brauer class}

\title{\mytitle}
\author{\myauthor}
\date{}

\usepackage[pdfstartview=FitH,
            pdfauthor={\myauthor},
            pdftitle={\mytitle},
            colorlinks,
            linkcolor=reference,
            citecolor=citation,
            urlcolor=e-mail,
            ]{hyperref}

\usepackage{amsmath}
\usepackage{amscd}
\usepackage{amsbsy}
\usepackage{amssymb}
\usepackage{verbatim}
\usepackage{eufrak}
\usepackage{microtype}
\usepackage{hyperref}
\usepackage{mathrsfs}
\usepackage{amsthm}
\pagestyle{headings}
\usepackage[all,cmtip]{xy}
\usepackage{tikz}
\usetikzlibrary{matrix,arrows}
\usepackage[abbrev,lite]{amsrefs}

\usepackage{mathpazo}

\usepackage{color}
\definecolor{todo}{rgb}{1,0,0}
\definecolor{conditional}{rgb}{0,1,0}
\definecolor{e-mail}{rgb}{0,.40,.80}
\definecolor{reference}{rgb}{.20,.60,.22}
\definecolor{mrnumber}{rgb}{.80,.40,0}
\definecolor{citation}{rgb}{0,.40,.80}

\DeclareMathAlphabet{\mathpzc}{OT1}{pzc}{m}{it}
\usepackage{dsfont}

\DeclareMathOperator{\End}{End}

\DeclareMathOperator{\PGL}{PGL}

\DeclareMathOperator{\GL}{GL}

\DeclareMathOperator{\Hoh}{H}

\DeclareMathOperator{\Br}{Br}

\DeclareMathOperator{\per}{per}

\DeclareMathOperator{\ind}{ind}

\DeclareFontEncoding{OT2}{}{}

\newcommand{\iso}{\cong}

\newcommand{\CC}{\mathds{C}}

\newcommand{\Gm}{\mathds{G}_{m}}

\newcommand{\op}{\mathrm{op}}

\newcommand{\Ascr}{\mathscr{A}}
\newcommand{\Bscr}{\mathscr{B}}

\newcommand{\Oscr}{\mathscr{O}}

\setlength{\marginparwidth}{1.2in}
\let\oldmarginpar\marginpar
\renewcommand\marginpar[1]{\-\oldmarginpar[\raggedleft\footnotesize #1]%
{\raggedright\footnotesize #1}}

\newcommand{\tensor}{\otimes}

\newcommand{\Mrm}{\mathrm{M}}

\theoremstyle{plain}
\newtheorem{theorem}{Theorem}
\newtheorem*{theorem*}{Theorem}
\newtheorem{lemma}[theorem]{Lemma}

\newtheorem{proposition}[theorem]{Proposition}

\newtheorem{corollary}[theorem]{Corollary}

\newtheoremstyle{named}{}{}{\itshape}{}{\bfseries}{.}{.5em}{#1 \thmnote{#3}}
\theoremstyle{named}

\theoremstyle{definition}

\theoremstyle{remark}

\begin{document}

\begin{abstract}
    We prove that the index of a Brauer class satisfies prime decomposition
    over a general base scheme. This
    contrasts with our previous result that there is no general prime
    decomposition of Azumaya algebras.

    \vspace{10pt}
    \paragraph{Key Words.}
    Brauer groups, linear representations, Young tableaux.

    \vspace{10pt}
    \paragraph{Mathematics Subject Classification 2010.}
    Primary:
    \href{http://www.ams.org/mathscinet/msc/msc2010.html?t=14Fxx&btn=Current}{14F22},
    \href{http://www.ams.org/mathscinet/msc/msc2010.html?t=18Exx&btn=Current}{16K50}.
\end{abstract}

\maketitle

\section{Introduction}

The Brauer group $\Br(k)$ of a field $k$ classifies the central simple
algebras over $k$ up to Brauer equivalence. Two such algebras $A$ and $B$ are Brauer
equivalent if $\Mrm_m(A)\iso\Mrm_n(B)$ for some integers $m,n$, where $\Mrm_n(A)$
denotes the $n\times n$ matrix ring with coefficients in $A$. The class of $A$
in $\Br(k)$ will be written as $[A]$. The group structure is given by tensor
product of algebras, and $-[A]=[A^{\op}]$. The Brauer group is a
key arithmetic invariant of the field $k$ and has been studied for about a
century.

A couple of facts about $\Br(k)$ are relevant to this paper. The first is that
it is a torsion group: each element $\alpha$ has a finite order, called the
\emph{period}, denoted $\per(\alpha)$. In terms of Brauer equivalence, $\per([A])$ is the least
positive integer $m$ such that $A^{\otimes m}\iso\Mrm_n(k)$. The second is
that $\dim_k A=d^2$ for each central simple algebra $A$. The number $d$
appearing in this equation is called the \emph{degree} of $A$.

Given $\alpha\in\Br(k)$, a theorem of Wedderburn implies that $\alpha=[D]$ for
a unique division algebra $D$. Classically, the \emph{index} of $\alpha$,
written $\ind(\alpha)$, is defined to be the degree of $D$. The authors
discovered in~\cite{aw4} that this definition is unsuitable for
generalization to the Brauer group to a scheme, a topological space, or more
generally a locally ringed topos. Rather, we define
\begin{equation*}
    \ind(\alpha)=\gcd\{\deg(A)|[A]=\alpha\}.
\end{equation*}
It is well-known that this definition agrees with the previous one given for
fields. For this and all other standard facts about the Brauer group and
division algebras used in this introduction,
see for example the book of Gille and Szamuely~\cite{gille-szamuely}*{Section 4.5}.

A related classical fact is that the index $\ind(\alpha)$ may be computed as the
minimum degree of a separable splitting field $K/k$ that splits $\alpha$. This
is an important connection because it allows a direct Galois-theoretic approach to
certain questions about these classes. For example, it makes it possible to
prove that $\per(\alpha)$ and $\ind(\alpha)$ have the same prime divisors by
using $p$-Sylow subgroups of Galois groups.

It was interesting for applications to other areas where period--index questions for Brauer groups arise, such as
globally over schemes, over topological spaces, over complex analytic spaces, and so on, to find methods of proof that
avoid the use of Galois groups. We were able to do this in a previous paper,~\cite{aw7}. We refer to
\cite{giraud}*{Section V.4} for a treatment of Azumaya algebras and the Brauer group in a locally ringed topos. There is a
natural bijective correspondence between Azumaya algebras of degree $n$ and $\PGL_n$-torsors. While the motivating
questions are phrased for Azumaya algebras, in practice we work with $\PGL_n$-torsors.

\begin{theorem}[\cite{aw7}]\label{thm:primes}
    Let $(X,\Oscr_X)$ be a connected locally-ringed topos and let
    $\alpha\in\Br(X,\Oscr_X)$. Then there exists an Azumaya $\Oscr_X$-algebra
    $\Ascr$ such that $[A]=\alpha$ and the prime divisors of $\per(\alpha)$ and $\deg(\Ascr)$
    coincide. In particular, the prime divisors of $\per(\alpha)$ and
    $\ind(\alpha)$ coincide.
\end{theorem}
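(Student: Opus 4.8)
The plan is to construct Azumaya algebras representing $\alpha$ directly out of representations of $\GL_n$, thereby avoiding Galois theory and working uniformly over the locally-ringed topos. Realize $\alpha$ by a $\PGL_n$-torsor $T$, so that $m:=\per(\alpha)$ divides $n$. Any homogeneous representation $\rho\colon\GL_n\to\GL_N$ on which the central $\Gm\subset\GL_n$ acts through $z\mapsto z^w$ descends to a homomorphism $\bar\rho\colon\PGL_n\to\PGL_N$; pushing $T$ forward along $\bar\rho$ yields an Azumaya algebra $\Ascr_\rho$ of degree $N$ with $[\Ascr_\rho]=w\alpha$, because the induced map on $\Hoh^2(X,\Gm)$ is multiplication by the central weight $w$. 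Since $\GL_n$, $\PGL_n$, and the relevant Schur functors are all defined over $\ZZ$, this construction is available in any topos. It therefore suffices to produce a representation of central weight $w\equiv 1\pmod m$ whose dimension $N$ has no prime factors outside those of $m$: then $[\Ascr_\rho]=\alpha$, and since $m\mid N$ automatically (the period divides any degree), the prime divisors of $\deg\Ascr_\rho=N$ and of $m$ coincide.

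First I would reduce to the case that $m$ is a prime power. Since $\Br(X,\Oscr_X)$ is torsion, the finite cyclic group $\langle\alpha\rangle$ decomposes into its primary parts, giving $\alpha=\sum_i\alpha_i$ with $\per(\alpha_i)=p_i^{a_i}$ and $m=\prod_i p_i^{a_i}$; each $\alpha_i$ is an integer multiple of $\alpha$, hence is itself represented by an Azumaya algebra (a tensor power of a representative of $\alpha$). If each $\alpha_i$ admits a representative $\Ascr_i$ of degree a power of $p_i$, then $\Ascr:=\bigotimes_i\Ascr_i$ represents $\sum_i\alpha_i=\alpha$ and has degree $\prod_i\deg(\Ascr_i)$, whose prime divisors are exactly the $p_i$ (each appears since $p_i\mid\per(\alpha_i)\mid\deg\Ascr_i$). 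So it is enough to treat a single prime: given $\alpha$ with $\per(\alpha)=p^a$ realized by a $\PGL_n$-torsor, with $p^a\mid n$, find a representation of central weight $\equiv 1\pmod{p^a}$ and dimension a power of $p$.

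For this last step I would search among the Schur functors $V_\lambda$ indexed by partitions $\lambda$, whose central weight is $|\lambda|$ and whose dimension is given by the Weyl dimension formula $\dim V_\lambda=\prod_{i<j}\frac{\ell_i-\ell_j}{j-i}$ with $\ell_i=\lambda_i+n-i$. The model example is the scaled staircase $\lambda_i=(p-1)(n-i)$, for which $\ell_i=p(n-i)$ is an arithmetic progression of common difference $p$; the formula then collapses to $\dim V_\lambda=p^{\binom n2}$, a pure power of $p$. The genuinely delicate point is to arrange the central weight $|\lambda|$ to be $\equiv 1\pmod{p^a}$ at the same time as the dimension remaining a power of $p$: the bare staircase controls the dimension but gives weight $(p-1)\binom n2\equiv 0\pmod p$, so one must instead run through a carefully chosen family of Young diagrams and determine, via the hook-length and hook-content formulas together with Kummer's theorem on $p$-adic valuations of binomial coefficients, precisely which diagrams have every prime $q\neq p$ dividing numerator and denominator to the same order. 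This combinatorial control of the $q$-adic valuations of dimensions of irreducible $\GL_n$-representations is the main obstacle, and is presumably where the Young-tableaux machinery advertised in the keywords is needed.

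Once such a diagram is exhibited for each prime $p\mid m$, assembling the pieces by the tensor-product step completes the construction of $\Ascr$. The final ``in particular'' assertion is then immediate: $\ind(\alpha)=\gcd_{[A]=\alpha}\deg(A)$ divides $\deg\Ascr$, so $\ind(\alpha)$ is also divisible only by primes dividing $m$, while $m=\per(\alpha)$ divides $\ind(\alpha)$; hence $\per(\alpha)$ and $\ind(\alpha)$ have the same prime divisors.
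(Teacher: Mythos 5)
Your framework is the right one---it is exactly the paper's Proposition \ref{pr:LocRTop}: a homogeneous representation of $\GL_n$ of central weight $w$ descends to $\PGL_n\to\PGL_N$ and carries a class-$\alpha$ torsor to a degree-$N$ representative of $w\alpha$; and the reduction to prime-power period via primary decomposition and tensor products is sound. But the proof has a genuine gap precisely where the theorem's content lives: you never produce, for a class of period $p^a$ realized in degree $n$, a Schur functor with central weight $\equiv 1\pmod{p^a}$ \emph{and} dimension a power of $p$. You exhibit the scaled staircase, which controls the dimension (giving $p^{\binom{n}{2}}$) but not the weight, and then explicitly defer the simultaneous control of both to an unspecified ``carefully chosen family of Young diagrams,'' calling it the main obstacle. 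That obstacle is the theorem. It is not even clear that a partition for $\GL_n$ with $p$-power dimension and weight $\equiv 1 \pmod{p^a}$ exists for the given $n$: perturbing the staircase to adjust $|\lambda|$ destroys the arithmetic-progression structure of the $\ell_i$ that made the Weyl quotient collapse, and no mechanism is offered to repair this. (A small arithmetic slip besides: $(p-1)\binom{n}{2}\equiv-\binom{n}{2}\pmod p$, which need not vanish.)

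It is worth noting that the cited proof in \cite{aw7} makes the opposite trade-off: it uses exterior powers $\Lambda^k$, for which the central weight is simply $k$ and hence trivially adjustable modulo the period, and concentrates all the work on controlling the prime factorization of the dimension $\binom{n}{k}$ via Kummer's theorem on carries in base $p$. Since $\binom{n}{k}$ is essentially never a prime power for $1<k<n-1$, even that route cannot proceed by a single representation with $p$-power dimension; the difficulty you have set aside is real and requires a genuinely different organization of the argument than ``find one Schur functor of $p$-power dimension and the right weight.''
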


Tam\'as Szamuely and Philippe Gille asked us recently if some other divisibility properties of
Brauer classes may be established in this generality.
We recall the following classical result over fields.
%
%

\begin{theorem}[\cite{gille-szamuely}*{Proposition 4.5.16},\cite{saltman}*{Theorem 5.7}]\label{thm:salt2}
    If $\alpha=[D]$ is in $\Br(k)$, where $k$ is a field and $D$ is the
    division algebra with class $\alpha$, and if $d=\ind(\alpha)$, then
    \begin{enumerate}
        \item   if $d=ab$ where $a$ and $b$ are relatively prime, then $D\iso
            E\otimes_k F$, where $E$ is a division algebra of degree $a$ and
            $E$ is a division algebra of degree $b$.
        \item   if $d=a_1\cdots a_r$ where the $a_i$ are relatively prime, then
            $D\iso E_1\otimes_k\cdots\otimes_k E_r$, where $E_i$ is a division
            algebra of degree $a_i$.
    \end{enumerate}
\end{theorem}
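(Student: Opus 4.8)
The plan is to deduce part (2) from part (1) and to prove part (1) by passing to the primary decomposition of the Brauer group. Since $\Br(k)$ is a torsion abelian group, it splits as $\bigoplus_p \Br(k)\{p\}$ into its $p$-primary components, and accordingly $\alpha = \sum_p \alpha_p$ with only finitely many $\alpha_p$ nonzero and $\per(\alpha_p)$ a power of $p$. Writing $D_p$ for the division algebra representing $\alpha_p$, the coincidence of the prime divisors of period and index (Theorem~\ref{thm:primes}, or the classical field-theoretic fact) gives $\deg(D_p) = \ind(\alpha_p) = p^{m_p}$ for some $m_p \geq 0$.

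The heart of the argument is the multiplicativity statement
\[
\ind(\alpha) = \prod_p \ind(\alpha_p), \qquad D \cong \bigotimes_p D_p.
\]
One divisibility is immediate: the algebra $\bigotimes_p D_p$ represents $\alpha$ and has degree $\prod_p \ind(\alpha_p)$, so $\ind(\alpha) \mid \prod_p \ind(\alpha_p)$. For the reverse divisibility I would use that each primary component lies in the cyclic subgroup generated by $\alpha$: under $\langle\alpha\rangle \cong \ZZ/\per(\alpha)$ the element $\alpha_p$ is an integer multiple of $\alpha$, so every separable field extension $K/k$ splitting $\alpha$ also splits each $\alpha_p$. Taking $K$ to be a maximal separable subfield of $D$, which has degree $\ind(\alpha)$ and splits $\alpha$, and invoking the standard fact that the index divides the degree of any finite separable splitting field, I get $\ind(\alpha_p) = p^{m_p} \mid \ind(\alpha)$ for every $p$; since the $p^{m_p}$ are pairwise coprime this yields $\prod_p \ind(\alpha_p) \mid \ind(\alpha)$. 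The two divisibilities force equality, and then $\bigotimes_p D_p$ is a central simple algebra whose degree equals $\ind(\alpha) = \deg(D)$, hence it is $D$ itself.

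To obtain (2), suppose $d = a_1 \cdots a_r$ with the $a_i$ pairwise coprime. Because $d = \ind(\alpha) = \prod_p p^{m_p}$, each prime power $p^{m_p}$ divides exactly one factor $a_i$; setting $E_i = \bigotimes_{p^{m_p}\mid a_i} D_p$ gives $\deg(E_i) = a_i$ and $D \cong E_1 \otimes_k \cdots \otimes_k E_r$. Each $E_i$ is then a division algebra, since a tensor factor of a division algebra over a field is division: if $E_i \cong \Mrm_s(E_i')$ with $s \geq 2$ then $D$ would be a matrix algebra of size $s$ over $k$, a contradiction. Part (1) is the case $r = 2$ (alternatively, prove (1) directly and deduce (2) by splitting off $E_1$ of degree $a_1$ and applying induction to the complementary factor, whose index is $a_2 \cdots a_r$).

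I expect the only genuine obstacle to be the non-dropping of the index, namely the reverse divisibility $\prod_p \ind(\alpha_p) \mid \ind(\alpha)$; everything else is bookkeeping with the primary decomposition. This step uses both the coprimality of the primary indices and the field-theoretic input that a separable splitting field has degree divisible by the index — precisely the kind of Galois-theoretic ingredient whose scheme- or topos-theoretic replacement is the point of the rest of the paper.
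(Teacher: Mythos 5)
Your proof is correct, but note that the paper itself offers no proof of this statement: Theorem~\ref{thm:salt2} is quoted as classical background, with the proof delegated to Gille--Szamuely and Saltman, so the only comparison available is with those sources and with the paper's later generalization. Your argument is essentially the standard one: primary decomposition of $\alpha$, the fact that $\ind(\alpha_p)$ is a power of $p$, the two divisibilities forcing $\ind(\alpha)=\prod_p\ind(\alpha_p)$, and then the degree count identifying $\bigotimes_p D_p$ with $D$. The one place where you deviate slightly from the textbook route is the reverse divisibility $\prod_p\ind(\alpha_p)\mid\ind(\alpha)$: the usual argument uses $\ind(\beta+\gamma)\mid\ind(\beta)\ind(\gamma)$ together with coprimality to get $\ind(\alpha_p)\mid\ind(\alpha)$, whereas you observe that $\alpha_p$ is an integer multiple of $\alpha$ and hence is split by a maximal separable subfield of $D$, of degree $\ind(\alpha)$. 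Both are valid; your version leans explicitly on the Galois-theoretic facts that $D$ has a separable maximal subfield and that the index divides the degree of any finite separable splitting field. It is worth being aware that this is precisely the ingredient that has no analogue over a general locally ringed topos, which is why the paper's Theorem~\ref{thm:main} (the generalization of the index statement here) is instead proved via symmetric-power representations $\PGL_m\to\PGL_N$ and Kummer's theorem on $p$-adic valuations of binomial coefficients, with no mention of splitting fields. So your proof is a correct proof of the field-level statement, but it is not a template for the paper's actual argument.
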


Saltman asked in~\cite{saltman} whether this type of result holds for Azumaya algebras and not just for
division algebras.
Our previous work and this paper, taken together, establish the maximum extent to which the
theory over fields generalizes to general contexts.
For example, we showed in~\cite{aw4,aw6} that Theorem~\ref{thm:salt2} fails for Azumaya
algebras over more general base schemes, even smooth affine schemes over $\CC$.

The point of this short note is to prove the following theorem, which
generalizes Theorem~\ref{thm:salt2} to the indices of Azumaya algebras:

\begin{theorem}\label{thm:main}
    Let $(X,\Oscr_X)$ be a connected locally-ringed topos, and let
    $\alpha=\alpha_1+\cdots+\alpha_t$ be the prime decomposition of a Brauer
    class $\alpha\in\Br(X,\Oscr_X)$, so that each $\per(\alpha_i)=p_i^{a_i}$ for
    distinct primes $p_1,\ldots,p_t$. Then,
    $$\ind(\alpha)=\ind(\alpha_1)\cdots\ind(\alpha_t).$$
\end{theorem}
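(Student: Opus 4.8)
The plan is to deduce the theorem from a single submultiplicativity property of the index, combined with Theorem~\ref{thm:primes}. First I would isolate the elementary fact that for any two Brauer classes $\mu,\nu\in\Br(X,\Oscr_X)$ one has $\ind(\mu+\nu)\mid\ind(\mu)\ind(\nu)$. To prove it, set $S=\{\deg A:[A]=\mu\}$ and $T=\{\deg B:[B]=\nu\}$, so that $\ind(\mu)=\gcd S$ and $\ind(\nu)=\gcd T$. Since $A\otimes_{\Oscr_X}B$ is Azumaya of class $\mu+\nu$ and degree $\deg A\cdot\deg B$, the set of products $S\cdot T$ is contained in $\{\deg C:[C]=\mu+\nu\}$, so $\ind(\mu+\nu)\mid\gcd(S\cdot T)$. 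One then checks $\gcd(S\cdot T)=\gcd(S)\gcd(T)$ prime by prime, using $v_p(\gcd(S\cdot T))=\min_{s,t}(v_p(s)+v_p(t))=\min_s v_p(s)+\min_t v_p(t)$. Iterating this yields the upper bound $\ind(\alpha)=\ind(\sum_i\alpha_i)\mid\prod_i\ind(\alpha_i)$.

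For the reverse divisibility I would use Theorem~\ref{thm:primes} to control the prime support of the indices in play. Since $\per(\alpha_i)=p_i^{a_i}$, that theorem forces $\ind(\alpha_i)$ to be a power of $p_i$, say $\ind(\alpha_i)=p_i^{b_i}$; in particular the $\ind(\alpha_i)$ are pairwise coprime. Now fix $i$ and put $\gamma_i=\sum_{j\neq i}\alpha_j$. Because $\gamma_i$ lies in the product of the primary components away from $p_i$, its period is $\prod_{j\neq i}p_j^{a_j}$, and Theorem~\ref{thm:primes} shows $\ind(\gamma_i)$ is prime to $p_i$. Writing $\alpha_i=\alpha+(-\gamma_i)$ and applying the submultiplicativity property, together with $\ind(-\gamma_i)=\ind(\gamma_i)$ (the opposite algebra has the same degree), gives $p_i^{b_i}=\ind(\alpha_i)\mid\ind(\alpha)\ind(\gamma_i)$. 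As $p_i^{b_i}$ is coprime to $\ind(\gamma_i)$, I conclude $p_i^{b_i}\mid\ind(\alpha)$.

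Running this for every $i$ produces pairwise coprime divisors $p_1^{b_1},\dots,p_t^{b_t}$ of $\ind(\alpha)$, hence their product $\prod_i\ind(\alpha_i)$ divides $\ind(\alpha)$, and combining with the upper bound gives the equality. The step I would treat most carefully — and the one I expect to be the main obstacle — is the submultiplicativity property itself: one must confirm that each class appearing (each $\alpha_i$, each $\gamma_i$, and $-\gamma_i$) is genuinely represented by an Azumaya algebra, so that the gcd defining its index is taken over a nonempty set, and one must justify $\gcd(S\cdot T)=\gcd(S)\gcd(T)$ for the a priori infinite degree sets that arise. A secondary bookkeeping point is that it is precisely the inclusion ``the prime divisors of the index lie among those of the period'' from Theorem~\ref{thm:primes} that makes $\ind(\alpha_i)$ a $p_i$-power and $\ind(\gamma_i)$ prime to $p_i$; the deeper representation-theoretic and Young-tableau input is confined to that cited result.
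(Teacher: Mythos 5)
Your proof is correct, and for the crucial inequality it takes a genuinely different, more economical route than the paper. Both arguments rest on the submultiplicativity $\ind(\mu+\nu)\mid\ind(\mu)\ind(\nu)$ coming from tensor products of Azumaya algebras, together with Theorem~\ref{thm:primes}; indeed the paper's proof of $v_{p_i}(\ind(\alpha))\le v_{p_i}(\ind(\alpha_i))$ is exactly an instance of your lemma (tensor a representative of $\alpha_i$ realizing the minimal $p_i$-valuation of its degree with a representative of $\gamma_i=\sum_{j\ne i}\alpha_j$ of degree prime to $p_i$). You diverge at the reverse inequality $v_{p_i}(\ind(\alpha_i))\le v_{p_i}(\ind(\alpha))$: the paper constructs an explicit representative of $\alpha_i=r\alpha$ by pushing a degree-$m$ representative of $\alpha$ through an $r$-fold symmetric power representation $\PGL_m\to\PGL_N$ with $N=\binom{r+m-1}{r}$, the integer $r$ being supplied by Lemma~\ref{lem:techLem2} so that $v_{p_i}(N)=v_{p_i}(\ind(\alpha))$; you instead write $\alpha_i=\alpha+(-\gamma_i)$ and apply submultiplicativity once more, using that $\ind(\alpha_i)$ is a $p_i$-power while $\ind(-\gamma_i)=\ind(\gamma_i)$ is prime to $p_i$ to strip off the unwanted factor. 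This symmetry trick eliminates the Schur-module and symmetric-power machinery of the paper's Section~3 entirely and reduces Theorem~\ref{thm:main} to Theorem~\ref{thm:primes} plus formal properties of the gcd-definition of the index; what the paper's longer route buys is an explicit representative of each primary component with a concretely computable degree (the subject of its Examples section) and a family of $\PGL$-representations the authors expect to be useful elsewhere. The points you flagged as delicate are indeed fine: each class $\alpha_i$, $\gamma_i$, $-\gamma_i$ is an integer multiple of $\alpha$ or a negative of one, hence lies in $\Br(X,\Oscr_X)$ and is represented by some Azumaya algebra, so all the degree sets are nonempty; and the identity $\gcd(S\cdot T)=\gcd(S)\gcd(T)$ holds prime by prime exactly as you computed, with no difficulty arising from the sets being infinite.
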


That is, whereas prime decomposition cannot hold for general Azumaya algebras,
it does hold for the index. Using
Theorem~\ref{thm:main} and several facts about $p$-adic valuations of binomial
coefficients, we prove the next result. For division algebras, it was proved by
Saltman~\cite{saltman}*{Theorem 5.5}. Our proof is new over division algebras as well.

\begin{theorem}\label{thm:salt}
    Let $(X,\Oscr_X)$ be a connected locally-ringed topos. Suppose
    $\alpha\in\Br(X,\Oscr_X)$ is a Brauer class, and $d=\ind(\alpha)$ its index.
    Then:
    \begin{enumerate}
        \item   \label{j:1} $\ind(m\alpha)|\gcd(\binom{d}{m},d)$;
        \item   \label{j:2}  $\ind(m\alpha)=\ind(\alpha)$ if $m$ is prime to $d$;
        \item   \label{j:3} if $e=\gcd(m,d)$, then $\ind(m\alpha)$ divides $d/e$.
    \end{enumerate}
\end{theorem}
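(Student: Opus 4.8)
The plan is to reduce all three assertions to a prime-power valuation computation using Theorem~\ref{thm:main}, and to control indices by one functorial device: the exterior power. First I would recall that if $\Ascr$ is an Azumaya algebra of degree $n$ with $[\Ascr]=\alpha$, then the Schur functor $\lambda^m$ induces a homomorphism $\PGL_n\to\PGL_{\binom{n}{m}}$, since the central $\Gm\subset\GL_n$ acts on $\lambda^m$ of the standard representation through its $m$-th power and therefore maps to the center of $\GL_{\binom{n}{m}}$. Pushing the $\PGL_n$-torsor of $\Ascr$ along this map yields an Azumaya algebra of degree $\binom{n}{m}$ whose class is $m\alpha$ (the $m$-th power on $\Gm$ multiplies the boundary class by $m$). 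Hence $\ind(m\alpha)\mid\binom{n}{m}$ for every degree $n$ occurring among representatives of $\alpha$. I would also record, from Theorem~\ref{thm:primes}, that the prime divisors of $\per(\beta)$ and $\ind(\beta)$ agree, so a $p$-primary class has $p$-power index.

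Next I would pass to the prime-power case. Writing $\alpha=\alpha_1+\cdots+\alpha_t$ as in Theorem~\ref{thm:main}, the class $m\alpha$ has prime decomposition $\sum_i m\alpha_i$, since each $m\alpha_i$ is again $p_i$-primary, so Theorem~\ref{thm:main} gives $\ind(m\alpha)=\prod_i\ind(m\alpha_i)$ with each factor a power of $p_i$; and likewise $\ind(\alpha)=\prod_i\ind(\alpha_i)$. Each of the three claims then becomes a comparison of $p_i$-adic valuations, so it suffices to prove, for a $p$-primary class $\beta$ with $\per(\beta)=p^a$ and $\ind(\beta)=p^c$ (note $a\le c$ since $\per(\beta)\mid\ind(\beta)$), the sharp bound $v_p(\ind(m\beta))\le c-v_p(m)$ whenever $v_p(m)<a$, together with $\ind(m\beta)=1$ when $p^a\mid m$.

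The crucial input, and what I expect to be the main obstacle, is that in the $p$-primary case the index $p^c$ is attained by an Azumaya algebra of degree exactly $p^c$, i.e.\ the least degree of a representative of $\beta$ equals its index. Theorem~\ref{thm:primes} supplies a representative whose degree is a power of $p$, and one must then show that the smallest such power is $p^c$ rather than something larger; this realization statement is precisely where the topos setting departs from the classical case, in which the underlying division algebra furnishes a degree-$\ind$ representative for free, and it is where the pathologies of \cite{aw4,aw6} must be ruled out. Granting such a representative $\Ascr$, I reduce $m$ modulo the period and apply the exterior-power bound to obtain $v_p(\ind(m\beta))\le v_p\binom{p^c}{\overline m}$ with $\overline m\equiv m\pmod{p^a}$ and $0<\overline m\le p^a\le p^c$; Kummer's formula $v_p\binom{p^c}{j}=c-v_p(j)$ for $0<j\le p^c$ then gives $v_p(\ind(m\beta))\le c-v_p(m)$, as the $\lambda^m$ construction already forces $\overline m$ to have the same $p$-valuation as $m$ in this range.

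Finally I would assemble the three parts by valuation bookkeeping. Part \eqref{j:3} is immediate, since at each relevant prime $v_p(d/\gcd(m,d))=c-\min(v_p(m),c)\ge c-v_p(m)\ge v_p(\ind(m\alpha))$. For part \eqref{j:1}, the divisibility $\ind(m\alpha)\mid d$ follows from \eqref{j:3}, while $\ind(m\alpha)\mid\binom{d}{m}$ follows from $v_p(\ind(m\alpha))\le c-v_p(m)\le v_p\binom{d}{m}$, the last step from $\binom{d}{m}=\tfrac{d}{m}\binom{d-1}{m-1}$, which gives $v_p\binom{d}{m}\ge v_p(d)-v_p(m)$. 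For part \eqref{j:2}, when $\gcd(m,d)=1$ part \eqref{j:3} yields $\ind(m\alpha)\mid d=\ind(\alpha)$; for the reverse, $\gcd(m,d)=1$ forces $\gcd(m,\per(\alpha))=1$ because the prime divisors coincide, so $m$ is invertible modulo $\per(\alpha)$ and $\alpha$ is an integer multiple of $m\alpha$, whence the same bound applied to $m\alpha$ gives $\ind(\alpha)\mid\ind(m\alpha)$ and hence equality. The only remaining details are the edge cases with $v_p(m)\ge a$, where $m\alpha_i=0$ and the inequalities hold trivially.
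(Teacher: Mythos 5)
Your overall architecture is the paper's: reduce to the $p$-primary case via Theorem~\ref{thm:main}, bound $\ind(m\beta)$ by the degree $\binom{n}{m}$ of an exterior-power representative, and finish with Kummer's theorem; your valuation bookkeeping for the three parts is correct (and the identity $m\binom{d}{m}=d\binom{d-1}{m-1}$ in part \eqref{j:1} is a tidy shortcut the paper does not use). But there is a genuine gap at exactly the step you flag as ``the crucial input'' and then grant: you assume that a $p$-primary class $\beta$ with $\ind(\beta)=p^c$ admits a representative of degree \emph{exactly} $p^c$. That minimal-degree realization statement is not available in this generality --- its failure over general bases is precisely the pathology of \cite{aw4} that forces the gcd definition of the index in the first place, and Theorem~\ref{thm:primes} does not deliver it (it gives a representative of $p$-power degree, but possibly $p^k$ with $k>c$). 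As written, your key inequality $v_p(\ind(m\beta))\le v_p\bigl(\binom{p^c}{\overline m}\bigr)$ rests on an unproved, and in general unavailable, claim.

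The fix is that you do not need such a representative. Since $\ind(\beta)$ is the gcd of the degrees of representatives, some representative has degree $n=p^c\ell$ with $\ell$ prime to $p$: only the $p$-adic valuation of the degree is controlled, not the degree itself. The exterior power then gives $\ind(\overline m\beta)\mid\binom{p^c\ell}{\overline m}$, and the whole point of the paper's Corollary~\ref{cor:kummer} is that for $0<\overline m<p^c$ one has $v_p\bigl(\binom{p^c\ell}{\overline m}\bigr)=c-v_p(\overline m)=v_p\bigl(\binom{p^c}{\overline m}\bigr)$: the prime-to-$p$ factor $\ell$ is invisible to the $p$-adic valuation in this range, because all the borrows in the base-$p$ subtraction occur below the $c$-th digit. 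Substituting this observation for your granted realization statement closes the gap and makes your argument essentially identical to the paper's.
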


This answers the original question posed to us by Gille and Szamuely, which was
whether point (1) above holds in general. Note that the results of
Theorem~\ref{thm:salt} are straightforward to prove in the special case when
the index $d$ is a prime power, and we will do so in Lemma~\ref{lem:primepower}.

Theorems \ref{thm:main} and \ref{thm:salt} hold for the Brauer groups of arbitrary connected
schemes, or even algebraic stacks, for connected topological spaces, for connected complex analytic
spaces, for the topos of $G$-sets when $G$ is a discrete group, and so on.

\vspace{10pt}
\paragraph{Acknowledgments} We would like to thank Tam\'as Szamuely for
bringing the problem at hand to our attention as well as for detailed comments
on a draft. We would like also like to thank Anssi Lahtinen for useful
conversations. The first author thanks the NSF for its support via grant
DMS-1461847 and the Hausdorff Research Institute for Mathematics
for its hospitality during the period when this paper was conceived. Both
authors are grateful to
Banff International Research Station for providing a stunning setting in which
to write the paper and to the organizers of the workshop ``The Use of Linear
Algebraic Groups in Geometry and Number Theory'' for giving us a reason to be
there.

\section{Proof of Theorem~\ref{thm:salt} assuming Theorem~\ref{thm:main}}

Write $v_p(m)$ for the $p$-adic valuation of $m$, which is to say the largest power
of $p$ that divides $m$.
We will employ Kummer's theorem on $p$-adic valuations of binomial coefficients throughout.

\begin{theorem*}[Kummer's Theorem]\label{thm:kummer}
    Let $m$ and $n$ be nonnegative integers with $m\leq n$. Then,
    $v_p(\binom{n}{m})$ is the number of carries when $n-m$ is added to $m$ in
    base $p$, or equivalently it is the number of borrows when $m$ is
    subtracted from $n$ in base $p$.
\end{theorem*}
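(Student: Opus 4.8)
The plan is to reduce the statement to Legendre's formula for the $p$-adic valuation of a factorial, combined with a bookkeeping lemma about carries. First I would recall Legendre's formula $v_p(N!)=\sum_{i\geq 1}\lfloor N/p^i\rfloor$. Writing $s_p(N)$ for the sum of the base-$p$ digits of $N$, I would expand each floor $\lfloor N/p^i\rfloor$ in terms of the digits of $N$ and sum the resulting geometric series to obtain the closed form
\begin{equation*}
    v_p(N!)=\frac{N-s_p(N)}{p-1}.
\end{equation*}

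Applying the identity $v_p\left(\binom{n}{m}\right)=v_p(n!)-v_p(m!)-v_p((n-m)!)$ and substituting the closed form, the linear terms cancel and I obtain
\begin{equation*}
    v_p\left(\binom{n}{m}\right)=\frac{s_p(m)+s_p(n-m)-s_p(n)}{p-1}.
\end{equation*}
It then remains to identify the right-hand side with the number of carries.

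The key step is the \emph{carry lemma}: if $a$ and $b$ are nonnegative integers and $c$ denotes the number of carries when $a$ and $b$ are added in base $p$, then $s_p(a)+s_p(b)-s_p(a+b)=(p-1)c$. I would prove this by induction on the number of digit positions, tracking the carry bit: at a given position a carry occurs exactly when the digit sum there, including any incoming carry, is at least $p$, in which case the recorded digit drops by $p$ while the next position gains $1$, for a net change of $-(p-1)$ in the total digit sum. Setting $a=m$ and $b=n-m$, so that $a+b=n$, this yields $v_p\left(\binom{n}{m}\right)=c$, the number of carries when $n-m$ is added to $m$ in base $p$.

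Finally, the equivalent formulation in terms of borrows follows from the observation that subtracting $m$ from $n$ and adding $n-m$ to $m$ in base $p$ are inverse procedures, with a borrow occurring in the subtraction at exactly those positions where a carry occurs in the addition. I expect the main obstacle to be the careful inductive proof of the carry lemma, since one must handle the propagation of the carry bit across positions correctly; the remaining manipulations of Legendre's formula are routine.
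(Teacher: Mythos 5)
The paper states Kummer's Theorem as a classical result and gives no proof of it, so there is nothing internal to compare against; your argument is the standard derivation and it is correct. The reduction via Legendre's formula $v_p(N!)=(N-s_p(N))/(p-1)$, the cancellation yielding $v_p\bigl(\binom{n}{m}\bigr)=\bigl(s_p(m)+s_p(n-m)-s_p(n)\bigr)/(p-1)$, and the carry lemma with its induction on digit positions are all sound, and the identification of carries in the addition $m+(n-m)$ with borrows in the subtraction $n-m$ is the right way to get the second formulation. This is a complete and correct proof of the stated theorem.
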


We will use the following special case of Kummer's Theorem several times.

\begin{corollary}\label{cor:kummer}
    Suppose that $0\leq r\leq s$ are integers, $\ell$ and $j$ are positive integers relatively prime to a prime
    number $p$, and $p^rj\leq p^s\ell$. Then,
    $v_p(\binom{p^s\ell}{p^rj})\geq s-r$, with equality if $p^r j<
    p^s$.
\end{corollary}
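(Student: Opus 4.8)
The plan is to read off the base-$p$ digits of the two entries and then count borrows, as licensed by Kummer's Theorem. Set $n=p^s\ell$ and $m=p^r j$. Because $\gcd(j,p)=\gcd(\ell,p)=1$, the base-$p$ expansion of $m$ has the digits in positions $0,\dots,r-1$ all equal to zero and a nonzero digit in position $r$ (the final base-$p$ digit of $j$), while the expansion of $n$ has zero digits in positions $0,\dots,s-1$ and a nonzero digit in position $s$. By Kummer's Theorem, $v_p(\binom{n}{m})$ equals the number of borrows occurring when $m$ is subtracted from $n$ in base $p$, so it suffices to analyze the borrow pattern of $n-m$.

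First I would establish the inequality. Subtracting digit by digit starting from position $0$, positions $0,\dots,r-1$ involve $0-0$ with no incoming borrow and so produce none. At position $r$, provided $r<s$, the digit of $n$ is $0$ while that of $m$ is nonzero, forcing a borrow. This borrow then propagates: at each of the positions $r+1,\dots,s-1$ the digit of $n$ is again $0$ and there is an incoming borrow, so the computation $0-d-1<0$ forces a borrow out of each such position regardless of the position-$d$ digit of $m$. This already exhibits $s-r$ borrows (at positions $r,r+1,\dots,s-1$), whence $v_p(\binom{n}{m})\geq s-r$. When $r=s$ this range is empty and the bound reads $v_p(\binom{n}{m})\geq 0$, which is automatic.

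For the equality statement I would use the hypothesis $p^r j<p^s$, which says precisely that $m<p^s$, so every nonzero digit of $m$ lies in a position $<s$. Under this assumption, at position $s$ the subtraction is (position-$s$ digit of $n$)$-0-1\geq 0$, since that digit is at least $1$; hence no borrow is generated at position $s$, and, with all higher digits of $m$ vanishing and no borrow propagating upward, no borrow occurs in any position $\geq s$. Thus the borrows are exactly those at positions $r,\dots,s-1$, giving $v_p(\binom{n}{m})=s-r$. The only delicate point, and the part most prone to error, is the bookkeeping of the propagating borrow across positions $r+1,\dots,s-1$: one must verify that the incoming borrow forces a further borrow there even though the corresponding digits of $m$ need not vanish, and, in the equality case, that this chain of borrows terminates exactly at position $s-1$ rather than reaching position $s$. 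Once the propagation is tracked correctly, both the inequality and the equality follow at once.
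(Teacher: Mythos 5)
Your proof is correct and follows essentially the same approach as the paper's: apply Kummer's Theorem and track the chain of borrows propagating from position $r$ up to position $s$ in the base-$p$ subtraction $p^s\ell - p^r j$. You simply carry out the borrow bookkeeping (including the termination of the chain at position $s$ in the equality case) more explicitly than the paper does.
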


\begin{proof}
    The first non-zero entry in the $p$-adic expansion of $p^rj$ occurs exactly
    in the $r$th place, corresponding to the coefficient of $p^r$, and
    similarly the first non-zero entry of $p^sj$ is in the $s$th place.
    When subtracting $p^s\ell-p^rj$, there is a sequence of borrows from the
    $(r+1)st$ place to the $s$th place. Hence, there are at least $s-r$
    borrows.  If $p^rj<p^s$, no additional borrows occur.
\end{proof}

We make use of the exterior-power representations for $\PGL_n$ to deduce
Corollary~\ref{thm:salt}.
This was the main device of
\cite{aw7}. Given a $\PGL_n$-torsor with Brauer class $\alpha$,
and an integer $0 \le m \le n$, this produces a
$\PGL_{\binom{n}{m}}$-torsor with Brauer class $m\alpha$.
This construction is that of Proposition \ref{pr:LocRTop} in the
specific case of a Young diagram consisting of a single column.

\begin{lemma}\label{lem:primepower}
    Suppose that $\per(\alpha)=p^\nu$, where $p$ is a prime number.
    Then the following hold:
    \begin{enumerate}
    \item \label{i:1}$\ind(m\alpha)|\ind(\alpha)$ for all integers $m$;
    \item \label{i:2} $\ind(m\alpha) | \binom{\ind(\alpha)}{m}$ for all integers $m$ satisfying $1 \le m \le p^{\nu} -1$.
    \end{enumerate}
\end{lemma}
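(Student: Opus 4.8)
The plan is to reduce both statements to a single $p$-adic valuation computation, feeding the exterior-power construction into Corollary~\ref{cor:kummer}. First I would assemble three preliminary facts. Since $\per(m\alpha)$ divides $\per(\alpha)=p^\nu$, Theorem~\ref{thm:primes} shows that $\ind(m\alpha)$ is a power of $p$ for every $m$ (it is $1$ when $m\alpha=0$); in particular $\ind(\alpha)=p^e$ for some $e$. Next, because any Azumaya algebra $\Ascr$ with $[\Ascr]=\alpha$ produces a class killed by $\deg(\Ascr)$, the period divides every achievable degree and hence divides the index, so $p^\nu\mid p^e$ and $\nu\le e$. Finally, since $\ind(\alpha)=\gcd\{\deg\Ascr:[\Ascr]=\alpha\}=p^e$, there is an algebra whose degree $n_0$ satisfies $v_p(n_0)=e$; write $n_0=p^e u_0$ with $p\nmid u_0$.

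For part~\eqref{i:2}, fix $m$ with $1\le m\le p^\nu-1$, so that $m<p^\nu\le p^e\le n_0$. Applying the exterior-power construction (Proposition~\ref{pr:LocRTop}, single-column case) to an algebra of degree $n_0$ with class $\alpha$ produces an Azumaya algebra of degree $\binom{n_0}{m}$ with class $m\alpha$, whence $\ind(m\alpha)\mid\binom{n_0}{m}$. Writing $m=p^r j$ with $p\nmid j$, Corollary~\ref{cor:kummer} applied with $s=e$ and $\ell=u_0$ gives $v_p\binom{n_0}{m}=e-r$, the equality clause being available precisely because $m<p^e$; the same corollary with $\ell=1$ gives $v_p\binom{p^e}{m}=e-r$ as well. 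Since $\ind(m\alpha)$ is a power of $p$, the divisibility $\ind(m\alpha)\mid\binom{n_0}{m}$ forces $v_p(\ind(m\alpha))\le e-r=v_p\binom{p^e}{m}$, and therefore $\ind(m\alpha)$ divides $\binom{p^e}{m}=\binom{\ind(\alpha)}{m}$.

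Part~\eqref{i:1} then follows formally: $m\alpha$ depends only on $m$ modulo $\per(\alpha)=p^\nu$, so I may assume $0\le m\le p^\nu-1$. The case $m=0$ is trivial, and for $1\le m\le p^\nu-1$ part~\eqref{i:2} gives $v_p(\ind(m\alpha))\le v_p\binom{p^e}{m}=e-v_p(m)\le e$, so $\ind(m\alpha)$ divides $p^e=\ind(\alpha)$. The step I expect to require the most care is the passage from the achievable degree $n_0$ to the index $p^e$ itself: in this generality there need be no algebra of degree exactly $\ind(\alpha)$, so I cannot simply apply the construction to a degree-$p^e$ algebra. The point that makes the argument work is that the spurious prime-to-$p$ factor $u_0$ is invisible to $v_p\binom{n_0}{m}$ once $m<p^e$, which is exactly the content of the equality clause in Corollary~\ref{cor:kummer}; verifying that $m$ stays below $p^e$ (using $\nu\le e$) is what licenses that clause.
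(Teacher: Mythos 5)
Your proposal is correct and follows essentially the same route as the paper: take a representative of $\alpha$ of degree $p^e u_0$ realizing $v_p(\ind(\alpha))=e$, apply the single-column (exterior-power) construction, and use Corollary~\ref{cor:kummer} together with $m<p^\nu\le p^e$ to identify $v_p\bigl(\binom{p^e u_0}{m}\bigr)$ with $v_p\bigl(\binom{p^e}{m}\bigr)$. The only cosmetic differences are that you deduce part~\eqref{i:1} from the exact value $e-v_p(m)$ rather than the paper's carry-counting bound, and that you make explicit the facts ($\per\mid\ind$, existence of a representative of minimal $p$-adic valuation) that the paper leaves implicit.
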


\begin{proof}
  The main result of~\cite{aw7} says that the indices of $\alpha$ and $m\alpha$ are powers of $p$.  Write
  $\ind(\alpha)=p^\sigma$.

  We prove item \eqref{i:2} first. By the definition of the index, there is a
  $\PGL_{p^\sigma\ell}$-torsor having Brauer class $\alpha$, where $\ell$ is prime to $p$. Taking the $m$-th exterior power, we
  produce a $\PGL_{\binom{p^\sigma\ell}{m}}$-torsor having Brauer class $m\alpha$. By Corollary~\ref{cor:kummer},
    \begin{equation*}
        v_p\left(\binom{p^\sigma\ell}{m}\right)=v_p\left(\binom{p^\sigma}{m}\right) = v_p\left( \binom{\ind(\alpha)}{m}
          \right),
    \end{equation*}
    since $m\leq p^\nu-1\leq p^\sigma-1$.

    We now prove item \eqref{i:1}. We may assume that $1 \le m \le p^\nu -1$, and apply part \eqref{i:2}. Kummer's
    theorem says that $v_p \left(\binom{\ind(\alpha)}{m}\right)$ is the number of
    carries when $m$ is added to $p^\sigma-m$ in base $p$. Both $m$ and $p^\sigma-m$ can be written in at most
    $\sigma$ base $p$ digits, and it follows that the number of carries is at most
    $\sigma$. Therefore $v_p(\ind(m\alpha)) \le \sigma$, as desired.
\end{proof}

\begin{proof}[Proof of Theorem~\ref{thm:salt} assuming Theorem~\ref{thm:main}]
    By Theorem~\ref{thm:main} and Lemma~\ref{lem:primepower},
    $\ind(m\alpha)|\ind(\alpha)$ for any Brauer class $\alpha$ and any $m$. If
    $m$ is prime to the period of $\alpha$, then \eqref{j:2} results immediately.

    We prove \eqref{j:1}. We know from Theorem~\ref{thm:main} and Lemma~\ref{lem:primepower} that $\ind(m\alpha)|d$, so
    we have to prove that $\ind(m\alpha)$ divides $\binom{d}{m}$. Let $p$ be a prime dividing $d$, and write
    $d=p^\sigma\ell$, where $\ell$ is prime to $p$. Let $\alpha_p$ denote the $p$-component of $\alpha$. Since
    $v_p(\ind(m\alpha))=v_p(\ind(m\alpha_p))$ by Theorem~\ref{thm:main}, it is enough to prove that $v_p(\ind(m\alpha_p))\leq
    v_p(\binom{d}{m})$.
    Let $m\equiv m'\pmod {p^\nu}$ with $0\leq m'<p^\nu$, where $\per(\alpha_p)=p^\nu$. Then,
    $v_p(\ind(m\alpha_p))=v_p(\ind(m'\alpha_p))$.  We know by Lemma~\ref{lem:primepower} and another invocation of
    Corollary~\ref{cor:kummer} that $v_p(\ind(m'\alpha_p))\leq
    v_p(\binom{p^\sigma}{m'})=v_p(\binom{p^\sigma\ell}{m'})$.
    So it suffices to show that $v_p(\binom{p^\sigma\ell}{m'})\leq v_p(\binom{p^\sigma\ell}{m})$. Since $v_p(m')=v_p(m)$
    by construction, this equality follows once again from Corollary~\ref{cor:kummer}.

    Finally, for \eqref{j:3}, note that by Theorem~\ref{thm:main}, it is enough to consider
    the case when $m=p^\delta$ and $d=p^\sigma$ are powers of the same prime
    $p$ and $\delta\leq\sigma$. Then, we know that $\ind(m\alpha)$ divides
    $\binom{p^\sigma}{p^\delta}$ by Lemma \ref{lem:primepower}. The
    $p$-adic valuation of this binomial coefficient is the number of carries
    when $p^\delta$ is added to $p^\sigma-p^\delta$ in the $p$-adic expansions
    of these numbers. There are $\sigma-\delta$ of these, and in the
    notation of \eqref{j:3}, we have $d/e=p^{\sigma-\delta}$.
\end{proof}

\section{Proof of Theorem \ref{thm:main}}

The method of proof of our theorem is to study certain morphisms
between projective general linear groups corresponding to symmetric powers. We arrived at these by considering morphisms
corresponding to more general Young tableaux, and some of the full theory of such morphisms is retained here in hope
that it may be useful in solving other problems.

We write $|\lambda|$ for the total number of boxes in the young diagram
$\lambda$. For all other conventions about Young diagrams and Young tableaux,
we refer to Fulton's book~\cite{fulton-tableaux}. 

To begin, we note that linear representations of $\GL_n$ corresponding to Young
diagrams can be defined integrally. A point that caused the authors some unease in the drafting is that the associated representations
of the symmetric group are not necessarily irreducible, as they are over the complex numbers. This is not important here,
however; all that is required is the existence of associated representations of $\GL_n$ on free modules, no reference is
made to irreducibility.

\begin{proposition}
  Let $R$ be a commutative ring. Let $\lambda$ be a Young diagram, let $n \ge 1$ be an integer and let
  $N$ denote the number of Young tableaux on $\lambda$ with entries in $\{1, \dots, n\}$. There is a map $\phi_\lambda(R):
  \GL_n(R) \to \GL_N(R)$, functorial in $R$, which fits in a functorially defined commutative square
  \begin{equation}
    \label{eq:1}
    \xymatrix{ \Gm(R) \ar[r] \ar^{x \mapsto x^{|\lambda|}}[d] & \GL_n(R) \ar^{\phi_\lambda}[d]\\
    \Gm(R) \ar[r] & \GL_N(R), }
  \end{equation}
  the horizontal maps being the inclusion maps of the subgroup of scalar invertible matrices.
\end{proposition}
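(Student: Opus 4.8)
The plan is to realize $\phi_\lambda$ as the action of $\GL_n$ on an integrally-defined Schur functor $\mathbb{S}_\lambda$ evaluated on the free module $R^n$. First I would recall the characteristic-free construction of the Schur module $\mathbb{S}_\lambda(E)$ attached to a finite free $R$-module $E$, following the conventions of \cite{fulton-tableaux} together with the integral theory of Akin--Buchsbaum--Weyman. Writing $\tilde\lambda=(\tilde\lambda_1,\dots,\tilde\lambda_c)$ for the column lengths of $\lambda$ and $\lambda=(\lambda_1,\dots,\lambda_r)$ for the row lengths, one defines $\mathbb{S}_\lambda(E)$ as the image of a natural map $\bigwedge^{\tilde\lambda_1}E\otimes\cdots\otimes\bigwedge^{\tilde\lambda_c}E\to\operatorname{Sym}^{\lambda_1}E\otimes\cdots\otimes\operatorname{Sym}^{\lambda_r}E$ built from the comultiplication and multiplication maps. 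This construction is manifestly functorial in $E$ and commutes with base change along $R\to R'$, and these two properties are exactly what will supply functoriality in $R$ at the end.

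Next I would record the two structural facts I need. The first is that $\mathbb{S}_\lambda(R^n)$ is free of rank $N$, with a distinguished basis indexed by the Young tableaux on $\lambda$ with entries in $\{1,\dots,n\}$; this is the straightening theorem (standard monomial theory), and it is precisely the integral statement alluded to in the discussion preceding the proposition. Because the basis is described combinatorially and is stable under base change, a single choice of basis serves simultaneously for all $R$. The second fact is that both the domain and codomain of the defining map are quotients of the $|\lambda|$-fold tensor power $E^{\otimes|\lambda|}$, since $\tilde\lambda_1+\cdots+\tilde\lambda_c=|\lambda|=\lambda_1+\cdots+\lambda_r$; hence the image $\mathbb{S}_\lambda(E)$ is a subquotient of $E^{\otimes|\lambda|}$ on which a scalar automorphism $x\cdot\id_E$ necessarily acts as $x^{|\lambda|}$.

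With these in hand the argument is short. Functoriality of $\mathbb{S}_\lambda$ yields a group homomorphism $\GL_n(R)=\Aut_R(R^n)\to\Aut_R(\mathbb{S}_\lambda(R^n))$, and the fixed tableau basis identifies the target with $\GL_N(R)$; this defines $\phi_\lambda(R)$, and compatibility of the basis with base change makes $\phi_\lambda$ functorial in $R$. For the square \eqref{eq:1}, the top horizontal inclusion sends $x\in\Gm(R)$ to the scalar matrix $x\cdot\id_{R^n}$, and by the second structural fact $\phi_\lambda(x\cdot\id_{R^n})=x^{|\lambda|}\cdot\id_{\mathbb{S}_\lambda(R^n)}$, which in the tableau basis is the scalar matrix $x^{|\lambda|}\cdot I_N$. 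This is exactly the image of $x$ under the left vertical map $x\mapsto x^{|\lambda|}$ followed by the bottom inclusion, giving the asserted commutativity; functoriality of all the maps involved makes the square functorial in $R$.

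The main obstacle is entirely the first structural fact. Over a field of characteristic zero the Schur functor is cut out by a Young symmetrizer, but that construction inverts factorials and fails over a general commutative ring, so the real work is invoking the characteristic-free straightening theory to guarantee that $\mathbb{S}_\lambda(R^n)$ is free of the correct rank $N$ and to produce a tableau basis compatible with base change. Once freeness and this basis are granted, the scalar-weight computation and the functoriality are formal. In particular, \emph{irreducibility} of the associated symmetric-group representation plays no role whatsoever, which is the point emphasized in the paragraph preceding the statement.
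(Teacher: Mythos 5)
Your proposal is correct and follows essentially the same route as the paper: both construct the integral Schur module of $R^n$ as a subquotient of the $|\lambda|$-fold tensor power (the paper cites Fulton, Chapter 8, and in particular Theorem 1 there for freeness of rank $N$ with its tableau basis), use functoriality of that construction in $R$ and in the module to define $\phi_\lambda$, and read off the scalar action $x\mapsto x^{|\lambda|}$ from the tensor-power presentation. Your added emphasis on the characteristic-free straightening theorem as the one nontrivial input, and on the irrelevance of irreducibility, matches the paper's own framing exactly.
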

\begin{proof}
  Write $V$ for $R^m$. We can construct a Schur module $V^\lambda$, as in
  \cite{fulton-tableaux}*{Chapter 8}. This construction is functorial in both the free $R$-module $V$ and the ring $R$, and
  $V^\lambda$ is equipped with a canonical $R$-linear $\End_R(V)$ action, and in particular a $\GL_n(R)$-action. The
  module $V^\lambda$ is a quotient of $V^{\tensor |\lambda|}$ by a certain module of $\GL_n(R)$-invariant relations, and
  therefore the reduction map $V^{\tensor |\lambda|} \to V^\lambda$ is compatible with the $\GL_n(R)$ action on each.

  The module $V^\lambda$ is a free $R$ module of dimension $N$ by
  \cite{fulton-tableaux}*{Chapter 8, Theorem 1}, and we
  therefore have a map $\phi_\lambda(R): \GL_n(R) \to \GL_N(R)$, which is moreover functorial in $R$.
  
  Finally, the reduction map $V^{\tensor |\lambda|} \to V^\lambda$ is $R$-linear, and the action of a scalar matrix $xI_n \in
  \GL_n(R)$ on $V^{\tensor |\lambda|}$ is by multiplication by $x^{|\lambda|}$, and it follows that $\phi_\lambda(xI_n)
  = x^{|\lambda|} I_N$ as asserted.
\end{proof}

\begin{proposition} \label{pr:LocRTop}
  Let $(X, \Oscr_X)$ be a locally ringed topos, let $\lambda$ be a Young
  diagram, let $m\ge 1$ be an integer, and let
  $N$ denote the number of Young tableaux on $\lambda$ with entries in $\{1, \dots, m\}$. There is a map of short exact
  sequences of group objects in $X$
  \begin{equation}
    \label{eq:2}
    \xymatrix{ 1 \ar[r] & \Gm \ar^{x \mapsto x^{|\lambda|}}[d] \ar[r] & \GL_m \ar^{\phi_\lambda}[d] \ar[r] & \PGL_m
        \ar^{\phi_\lambda}[d] \ar[r] & 1  \\ 
        1 \ar[r] & \Gm \ar[r] & \GL_N \ar[r] & \PGL_N \ar[r] & 1 }
  \end{equation}
  and in particular, there is a commutative diagram of cohomology groups in $X$
  \begin{equation}
    \label{eq:3}
    \xymatrix{ \Hoh^1(X, \PGL_n) \ar^{\phi_\lambda}[d] \ar[r] & \Br(X)
    \ar^{\times |\lambda|}[d] \ar@{^{(}->}[r] &
      \Hoh^2(X, \Gm) \ar^{\times |\lambda|}[d] \\
      \Hoh^1(X, \PGL_N) \ar[r] & \Br(X) \ar@{^{(}->}[r] & \Hoh^2(X, \Gm). }
  \end{equation}
\end{proposition}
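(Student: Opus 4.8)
The plan is to obtain both diagrams by evaluating the ring-level construction of the preceding proposition on the structure sheaf $\Oscr_X$ and then feeding the resulting map of short exact sequences into the long exact sequence of nonabelian cohomology. First I would observe that the functor $R \mapsto \GL_m(R)$ on commutative rings is represented by an affine group scheme, and likewise for $\GL_N$, and that the map $\phi_\lambda(R)$ of the previous proposition, being functorial in $R$, is precisely a natural transformation of such functors, i.e.\ a morphism of group schemes $\phi_\lambda \colon \GL_m \to \GL_N$. Since $(X,\Oscr_X)$ is a ringed topos, each of these group schemes determines a sheaf of groups on $X$ by the rule $U \mapsto \GL_m(\Oscr_X(U))$, sheafified as needed, and a morphism of group schemes induces a morphism of the associated sheaves of groups. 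This produces the middle vertical arrow $\phi_\lambda \colon \GL_m \to \GL_N$ of \eqref{eq:2} as a map of group objects in $X$.

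Next I would invoke the commutative square \eqref{eq:1}. Evaluated on $\Oscr_X$, it says exactly that $\phi_\lambda$ carries the central subgroup $\Gm \hookrightarrow \GL_m$ of scalar matrices into the central subgroup $\Gm \hookrightarrow \GL_N$, and that the induced map on these centers is the power map $x \mapsto x^{|\lambda|}$. In particular $\phi_\lambda$ respects the centers, so by the universal property of the quotient sheaf $\PGL_m = \GL_m/\Gm$ it descends to a morphism $\phi_\lambda \colon \PGL_m \to \PGL_N$. The two rows of \eqref{eq:2} are short exact sequences of sheaves of groups --- this is the defining presentation of $\PGL$ as the quotient of $\GL$ by its central $\Gm$, the surjectivity understood after sheafification --- and the three vertical maps visibly commute with the horizontal ones, which yields the asserted map of short exact sequences.

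Finally I would pass to cohomology. A map of short exact sequences of sheaves of groups induces a map of the associated long exact sequences of pointed cohomology sets, and in particular a commuting square relating the connecting maps $\Hoh^1(X,\PGL_m) \to \Hoh^2(X,\Gm)$ and $\Hoh^1(X,\PGL_N) \to \Hoh^2(X,\Gm)$ to the vertical arrow $\phi_\lambda$ on the left and the map induced by $x \mapsto x^{|\lambda|}$ on the right. The key identification is that, since $\Gm$ is an abelian sheaf written multiplicatively, the power map $x \mapsto x^{|\lambda|}$ is the multiplication-by-$|\lambda|$ endomorphism of $\Gm$; as $\Hoh^2(X,-)$ is additive on abelian sheaves, the induced map on $\Hoh^2(X,\Gm)$ is multiplication by $|\lambda|$. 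Restricting to the subgroup $\Br(X) \hookrightarrow \Hoh^2(X,\Gm)$ gives the middle vertical arrow, again multiplication by $|\lambda|$, and assembling these squares produces \eqref{eq:3}. The horizontal map $\Hoh^1(X,\PGL_m) \to \Br(X)$ sends the class of a $\PGL_m$-torsor to its Brauer class, and commutativity with $\phi_\lambda$ records that the $\lambda$-construction multiplies the Brauer class by $|\lambda|$.

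I expect the main obstacle to be the topos-theoretic bookkeeping in the second step rather than any genuine difficulty. One must be careful that $\PGL_m$ is the \emph{sheaf} quotient, so that the top and bottom rows really are short exact sequences of sheaves of groups in the possibly nontrivial topos $X$, where $\GL_m \to \PGL_m$ need only be an epimorphism of sheaves and not a surjection on sections, and that the descent of $\phi_\lambda$ to the quotients is the one forced by the universal property rather than some ad hoc choice. Everything else is the formal naturality of the connecting homomorphism in nonabelian cohomology together with the standard fact that the $|\lambda|$-th power map on the abelian sheaf $\Gm$ induces multiplication by $|\lambda|$ on $\Hoh^2(X,\Gm)$.
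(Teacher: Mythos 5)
Your proposal is correct and follows essentially the same route as the paper: define the sheaves of groups by $U\mapsto \GL_i(\Oscr_X(U))$, use the functoriality of the square \eqref{eq:1} to get the map of central extensions and the induced map on $\PGL$'s, then invoke the naturality of the nonabelian long exact sequence (the paper cites Giraud IV~4.2.10) together with the fact that the $|\lambda|$-th power map on $\Gm$ induces multiplication by $|\lambda|$ on $\Hoh^2(X,\Gm)$. Your additivity argument for that last point is a clean alternative to the paper's appeal to \v{C}ech cohomology, and your care about $\PGL$ being the sheaf quotient matches the paper's setup.
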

\begin{proof}
  The objects $\GL_i$, including the special case $\Gm=\GL_1$, of $X$ are determined by the property that $\GL_i(U) =
  GL_i(\Oscr_X(U))$. The objects $\PGL_i$ are defined as the quotients $\GL_i/ \Gm$. Diagram \eqref{eq:2} therefore 
  requires only a commutative square 
\begin{equation*}
    \xymatrix{ \Gm(\Oscr_X(U) ) \ar[r] \ar^{x \mapsto x^{|\lambda|}}[d] & \GL_m(\Oscr_X(U)) \ar^{\phi_\lambda}[d]\\
    \Gm(\Oscr_X(U)) \ar[r] & \GL_N(\Oscr_X(U)), }
  \end{equation*}
  which is functorial in the ring $\Oscr_X(U)$. This is precisely the content of Diagram \eqref{eq:1}. We refer to
  \cite{giraud}*{IV 4.2.10} for the existence of the long exact sequence in cohomology in this generality.

  One may deduce that the map $\Gm \to \Gm$ given by $x \mapsto x^{|\lambda|}$ induces multiplication by $|\lambda|$ on
  cohomology groups by deriving these groups as \v{C}ech cohomology groups, among other ways. Diagram \eqref{eq:3} follows.
\end{proof}

The Azumaya algebra interpretation of Diagram \eqref{eq:3} is the following: given the data of a Young diagram $\lambda$
and a degree-$m$ Azumaya algebra $\Ascr$ on $X$, we may construct a degree-$N$ Azumaya $\Ascr'$ with the property that
$[\Ascr']= |\lambda| [\Ascr]$ in the Brauer group. The power of this construction lies in the great freedom we have in
our choice of $\lambda$.

To derive general results, however, we should like to have closed-form expressions for $N$. In
the remainder of the paper, the aim of which is to prove an existence result, we concentrate on one particular case in
which these closed-form expressions exist: that of Young diagrams of shape $(t)$, corresponding to $t$-fold symmetric
powers. In doing this, we abandon all pretence of minimality, being content to produce colossal representations which
happen to satisfy our requirements for prime factorization.

For a partition of shape $\lambda =(t)$, the number of Young tableaux on $\lambda$ with entries in
$\{1,\dots, m\}$---to wit, the dimension of the $t$-fold symmetric power of an $m$-dimensional vector space---is well known to be
\[N= \binom{t+m-1}{t}. \]

\begin{lemma}\label{lem:techLem2}
   Let $m\ge 2$ be a positive integer, let $p$ be a prime number such that $v_p(m) = s >
  0$. Let $\ell$ be an integer relatively prime to $p$. There exists some integer $r \ge 1$ satisfying the following
  three conditions:
  \begin{enumerate}
  \item \label{it1} $r \equiv 0 \pmod{\ell}$,
  \item \label{it2} $r \equiv 1 \pmod{p^s}$, and
  \item \label{it3} $v_p \left( \binom{r + m -1}{r} \right) = s$.
  \end{enumerate}
\end{lemma}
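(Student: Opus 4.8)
The plan is to exhibit such an $r$ explicitly. Write $m=p^{s}u$ with $p\nmid u$; this is possible precisely because $v_p(m)=s$. By Kummer's Theorem, condition \eqref{it3} asks that the number of carries in the base-$p$ addition of $r$ and $(r+m-1)-r=m-1$ be exactly $s$, so the whole task is to arrange the two congruences \eqref{it1} and \eqref{it2} together with this carry count.

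First I would record the base-$p$ digits of $m-1$. Since $m\equiv 0\pmod{p^{s}}$, the lowest $s$ digits of $m-1=p^{s}u-1$ are all equal to $p-1$, the digit in position $s$ is $u_0-1$ where $u_0=u\bmod p\in\{1,\dots,p-1\}$, and the remaining digits agree with those of $u$ shifted up by $s$. This is exactly where the hypothesis $v_p(m)=s$ (rather than $v_p(m)>s$) enters: it forces $u_0\neq 0$, so no borrow propagates past position $s$ and the digit there is $u_0-1\le p-2$. In particular, once $p^{k}>m$ all digits of $m-1$ lie in positions below $k$.

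Next I would look for $r$ of the shape $r=1+p^{k}b$ with $p^{k}>m$ and $b\ge 0$ to be chosen. Then \eqref{it2} is automatic, since $k\ge s$ gives $r\equiv 1\pmod{p^{s}}$, and \eqref{it1} becomes the single linear congruence $p^{k}b\equiv -1\pmod{\ell}$; because $\gcd(p,\ell)=1$ the element $p^{k}$ is a unit modulo $\ell$, so a nonnegative solution $b$ exists. The purpose of placing $b$ in positions $\ge k$ is that the nonzero digits of $r$ other than its leading $1$ cannot interact with any digit of $m-1$. Counting carries when adding $r$ and $m-1$: positions $0,\dots,s-1$ contribute $1+(p-1)=p$ and then $0+(p-1)+1=p$, giving exactly $s$ carries and feeding a carry $1$ into position $s$; at position $s$ the sum is $0+(u_0-1)+1=u_0\le p-1$, so that carry is absorbed with no new carry produced; and every higher position adds a digit of $m-1$, or a digit of $b$, to a $0$. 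Hence the total is exactly $s$, which is \eqref{it3}.

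Equivalently, and more cleanly, one can avoid the bookkeeping by using the digit-sum form of Kummer's Theorem. Writing $S_p$ for the base-$p$ digit sum, the disjointness of supports gives $S_p(r)=1+S_p(b)$ and $S_p(r+m-1)=S_p(m)+S_p(b)$, so that $v_p\binom{r+m-1}{r}=\frac{1+S_p(m-1)-S_p(m)}{p-1}$, and the digit description of $m-1$ yields $S_p(m-1)-S_p(m)=s(p-1)-1$, whence the valuation is $s$. I expect the only real obstacle to be the tension between \eqref{it1} and \eqref{it3}: one must force $r\equiv 0\pmod{\ell}$ without letting the carry count overshoot $s$. This is resolved by the device of confining the $\ell$-adjusting part of $r$ to high base-$p$ digits (positions $\ge k$ with $p^{k}>m$), where it is invisible to the addition with $m-1$; everything else is a routine congruence solved by the coprimality of $p$ and $\ell$.
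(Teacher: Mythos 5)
Your proposal is correct and follows essentially the same route as the paper: you pick $r\equiv 1\pmod{p^{k}}$ with $p^{k}>m$ and $r\equiv 0\pmod{\ell}$ (the paper does this via CRT with a modulus $p^{g}>m$, which is the same choice), and then verify condition (3) by observing that the base-$p$ digits of $r-1$ and of $m-1$ occupy disjoint positions, so Kummer's theorem reduces the valuation to that of $\binom{m}{m-1}=m$. Your explicit carry count and digit-sum computation are just a more detailed rendering of the paper's one-line appeal to Kummer; both are valid.
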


\begin{proof}
  Let $g$ be an integer exceeding $\log_p m$. Note that $g > s$, since $p^s | m$. Choose $r \ge 1$ such that 
  \begin{align*}
    r& \equiv 0 \pmod{\ell}, \\
    r &\equiv 1 \pmod{ p^g}.
  \end{align*}
  This $r$ satisfies conditions \eqref{it1} and \eqref{it2}. We calculate the $p$-adic valuation
  of  
\[ \binom{r+ m -1}{r} = \binom{r-1+ m}{m-1}\]
The $p$-adic expansion of $r-1$ has no terms in any position below the $g$-th place. The $p$-adic expansion of $m-1$ has
no terms in any position above the $g-1$-th place. It follows from this and
Kummer's theorem that the $p$-adic valuation of $\binom{r-1+ m}{m-1}$ agrees with that of $\binom{m}{m-1} = m$, which is
$s$. 
\end{proof}

\begin{proof}[Proof of Theorem~\ref{thm:main}]
  It suffices to prove that $v_{p_i}(\ind(\alpha))=v_{p_i}(\ind(\alpha_i))$; indeed, it is enough to prove
  this for $p=p_1$. Write $a=a_1$. Choose $\ell$ to be the non-$p$-primary part of $\per(\alpha)$, i.e.,
  $\ell = \prod_{i=2}^t p_i^{a_i}$.

 Write $s$ for $v_p(\ind(\alpha))$. We wish to show $s= v_p(\ind(\alpha_1))$. We first show the inequality
 $v_p(\ind(\alpha_1)) \ge s$ as follows. Take a representative
  $\Bscr$ for $\alpha_1$ for which $v_p(\deg(\Bscr)) = v_p(\ind(\alpha_1))$. Take a representative $\mathscr{C}$ for
  $\alpha_2 + \dots + \alpha_t$ for which $p \nmid \deg(\mathscr{C})$---this may be done using the main result of
  \cite{aw7} for instance. Then $\Bscr \tensor \mathscr{C}$ represents $\alpha$
  (see for example~\cite{aw7}*{Proposition 4}) and 
\[ s = v_p( \ind(\alpha)) \le v_p( \deg ( \Bscr \tensor \mathscr{C} )) =
  v_p(\deg ( \Bscr )) =  v_p(\ind(\alpha_1))\] as required.

  We now show the reverse inequality. There is some Azumaya algebra $\Ascr$, of degree $m$, representing
  $\alpha$ and such that $v_p(m) = v_p(\ind(\alpha))=s$.

  We apply Lemma \ref{lem:techLem2}, using $m, p, s$ and $\ell$ as above in order to obtain an integer $r$. Using the
  projective representation $\PGL_m \to \PGL_{N}$ given by Young diagram of the shape $(r)$, viz.~the $r$-fold symmetric
  power, we obtain a representative $\Ascr'$ for the Brauer class $r\alpha$ having degree $N=\binom{r + m -1}{r}$. Since
  $r \equiv 1 \pmod{p^a}$ and $r \equiv 0 \pmod{\ell}$, it follows that $r\alpha = \alpha_1$. In particular,
  $\ind(\alpha_1) | N$. We also know that $v_p(N) =  s$, so that $v_p(\ind(\alpha_1)) \le s$, as required.
\end{proof}

\section{Examples}

We end the paper with an example to show why it is necessary in the proof to consider
representations of projective general linear groups other than the exterior power representations of \cite{aw7}.
Let $P$ be a $\PGL_{36}$-torsor with Brauer class $\alpha=\alpha_2+\alpha_3$, these summands being of period $4$
and $9$ respectively.
We would like to show that $\alpha_2$ has index dividing $4$. Proceeding as in~\cite{aw7}, we might use the exterior algebra representations
$$\PGL_{36}\rightarrow\PGL_{\binom{36}{9}}$$ with class $\alpha_2$ and
$$\PGL_{36}\rightarrow\PGL_{\binom{36}{27}}$$ with class $-\alpha_2$ to find explicit representatives. However,
$v_{2}(\binom{36}{9})=v_2(\binom{36}{27})=4$, so these do not suffice to establish that $\ind(\alpha_2)$ is any smaller
than $16$.

Unwinding the proof of Theorem~\ref{thm:main}, we find we are asked to take $r$ such that $r \equiv 0 \pmod{9}$, $r
\equiv 1 \pmod{4}$ and such that $v_2\left(\binom{r+36-1}{r} \right) = 2$.
The smallest $r$ produced by the proof of Lemma~\ref{lem:techLem2} is $r=513$.

Once $r=513$ is chosen, one has $513\alpha=\alpha_2$, and
$513$-fold symmetric power gives a representation
\[ \PGL_{36} \to \PGL_{\binom{548}{513}} .\]
Hence, associated to any degree $36$ Azumaya algebra of class $\alpha$ there is an Azumaya
algebra of class $\alpha_2$ of degree $\binom{548}{513}$, which is divisible by $4=2^s$ but not by $8$.

We remark that this quantity, which is approximately
$2.3\times 10^{55}$, is simply the output of one particular construction that is known to
work in all cases. In fact, in the case of $36$ and the prime $2$, the $r=9$-fold (rather than the $513$-fold) symmetric
power may be taken instead, yielding the much smaller representation 
\[ \PGL_{36} \to \PGL_{N},\]
where
\[ N = \binom{44}{9} = 708930508 = 2^2\cdot 11\cdot 13\cdot 19\cdot 37\cdot 41\cdot 43. \]

Before settling on the current argument to prove Theorem \ref{thm:main}, we considered an argument based on diagrams of
the form $\lambda=(n,1)$.  These could be used to give a proof of Theorem~\ref{thm:main} along the same lines of the one
given here. The general procedure in that case for $36$ and $p=2$ produces the partition $\lambda=(260,1)$, which gives
an Azumaya algebra of degree $N$ around $1.14\times 10^{47}$ with $v_2(N)=2$ and class $\alpha_2$.

\begin{bibdiv}
\begin{biblist}



\bib{aw4}{article}{
    author={Antieau, Benjamin},
    author={Williams, Ben},
    title={Unramified division algebras do not always contain Azumaya
    maximal
    orders},
    journal={Invent. Math.},
    volume={197},
    date={2014},
    number={1},
    pages={47--56},
    issn={0020-9910},
}

\bib{aw6}{article}{
    author = {Antieau, Benjamin},
    author = {Williams, Ben},
    title = {Topology and purity for torsors},
    journal = {Documenta Math.},
    year = {2015},
    volume = {20},
    pages = {333--355},
}

\bib{aw7}{article}{
    author={Antieau, Benjamin},
    author={Williams, Ben},
    title={The prime divisors of the period and index of a Brauer
    class},
    journal={J. Pure Appl. Algebra},
    volume={219},
    date={2015},
    number={6},
    pages={2218--2224},
    issn={0022-4049},
}
\bib{fulton-tableaux}{book}{
    author={Fulton, William},
    title={Young tableaux},
    series={London Mathematical Society Student Texts},
    volume={35},
    publisher={Cambridge University Press, Cambridge},
    date={1997},
    pages={x+260},
    isbn={0-521-56144-2},
    isbn={0-521-56724-6},
}%
%
%
\bib{gille-szamuely}{book}{
    author={Gille, Philippe},
    author={Szamuely, Tam{\'a}s},
    title={Central simple algebras and Galois cohomology},
    series={Cambridge Studies in Advanced Mathematics},
    volume={101},
    publisher={Cambridge University Press},
    place={Cambridge},
    date={2006},
    pages={xii+343},
    isbn={978-0-521-86103-8},
    isbn={0-521-86103-9},
}

\bib{giraud}{book}{
    author={Giraud, Jean},
    title={Cohomologie non ab\'elienne},
    note={Die Grundlehren der mathematischen Wissenschaften, Band
    179},
    publisher={Springer-Verlag, Berlin-New York},
    date={1971},
    pages={ix+467},
}
\bib{saltman}{book}{
    author={Saltman, David J.},
    title={Lectures on division algebras},
    series={CBMS Regional Conference Series in Mathematics},
    volume={94},
    publisher={Published by American Mathematical Society, Providence, RI},
    date={1999},
    pages={viii+120},
    isbn={0-8218-0979-2},
}

\end{biblist}
\end{bibdiv}

\end{document}